\documentclass[10pt,a4paper]{amsart}

\usepackage{times,amsmath,amsbsy,amssymb,amscd,mathrsfs}
\usepackage{slashbox}
\usepackage{graphicx,subfigure,epstopdf,wrapfig,chemarrow}

\usepackage[ruled,linesnumbered]{algorithm2e}
\usepackage{multicol,multirow}
\usepackage{mathtools}
\usepackage[usenames,dvipsnames,svgnames,table]{xcolor}
\usepackage[all]{xy}
\usepackage{wrapfig}
\usepackage{tcolorbox}

%%%% Yifeng's macro %%%%%%%
%\usepackage{algorithm}
%\usepackage{algorithmic}
 \def\p{\partial} 
\def\to{\rightarrow}
%\def\Omega{\Omega}  \def\Omega{\Omega}

%\def\ms{\medskip} \def\bs{\bigskip} \def\ss{\smallskip}

%%%%%%%%%%%%%%%%%%%%%%%%

%\usepackage[labelformat=simple]{subfig}
%\usepackage[hang,small,bf]{caption}

%\usepackage{tikz}
%\usetikzlibrary{fit}

%\usepackage[notcite,notref]{showkeys}
%\usepackage[numbered]{mcode}

\definecolor{myBlue}{rgb}{0.0,0.0,0.55}
%\definecolor{green}{rgb}{0.0,0.7,0.2}
\usepackage[pdftex,colorlinks=true,citecolor=myBlue,linkcolor=myBlue]{hyperref}

\usepackage[hyperpageref]{backref}

\usepackage{comment,enumerate,multicol,xspace}

  \newcounter{mnote}
  \setcounter{mnote}{0}
  
  \let\oldmarginpar\marginpar
    \renewcommand\marginpar[1]{\-\oldmarginpar[\raggedleft\footnotesize #1]%
    {\raggedright\footnotesize #1}}

%\usepackage[pdftex,dvipsnames]{xcolor}

%\usepackage{xargs} % Use more than one optional parameter in a new commands
%\usepackage[colorinlistoftodos,prependcaption,textsize=footnotesize]{todonotes}
%
%\newcounter{mycomment}
%\newcommand{\mycomment}[2][]{%
%% initials of the author (optional) + note in the margin
%\refstepcounter{mycomment}%
%{%
%\todo[linecolor=blue,backgroundcolor=blue!25,bordercolor=blue]{%
%\textbf{Comment [{\sc #1\themycomment}]:}\\#2}%
%}}
%
%\newcommandx{\change}[2][1=]
%{\todo[linecolor=OliveGreen,backgroundcolor=OliveGreen!25,bordercolor=OliveGreen,#1]{%
%{\sc Change}:\\#2}}
%
%\newcommandx{\improvement}[2][1=]
%{\todo[linecolor=Plum,backgroundcolor=Plum!25,bordercolor=Plum,#1]{%
%{\sc Improvement}:\\#2}}
%
%\newcommandx{\unsure}[2][1=]
%{\todo[linecolor=red,backgroundcolor=red!25,bordercolor=red,#1]{%
%{\sc Unsure}:\\ #2}}
%

% \newcommand{\mnote}[1]{}

%\usepackage{geometry}
%%\usepackage{graphicx,pst-eps,epstopdf}
%\geometry{letterpaper, margin=1.5in}

\newtheorem{theorem}{Theorem}[section]

\newtheorem{remark}[theorem]{Remark}

\newcommand{\dx}{\,{\rm d}x}
\newcommand{\dd}{\,{\rm d}}

\newcommand{\bs}{\boldsymbol}

\newcommand{\vertiii}[1]{{\left\vert\kern-0.25ex\left\vert\kern-0.25ex\left\vert #1
    \right\vert\kern-0.25ex\right\vert\kern-0.25ex\right\vert}}

\begin{document}
\title{A Minimization Method for The Double-Well Energy Functional}
%\author{Long Chen}\date{\today}

\author{Qian Zhang}
\address{Beijing Institute for Scientific and Engineering Computing, Beijing University of Technology, Beijing 100124, China}
\email{liemeixiang@hotmail.com}

\author{Long Chen}
\address{Beijing Institute for Scientific and Engineering Computing, Beijing University of Technology, Beijing 100124, China and Department of Mathematics, University of California at Irvine, Irvine, CA 92697, U.S.A}
\email{chenlong@math.uci.edu}
\thanks{The work of Long Chen was supported by NSF Grant DMS-1418934 and in part by the Sea Poly Project of Beijing Overseas Talents.}

\author{Yifeng Xu}
\address{Department of Mathematics and Scientific Computing Key Laboratory of Shanghai Universities, Shanghai Normal University, Shanghai 200234, China}
\email{yfxu@shnu.edu.cn}

\thanks{
The work of Yifeng Xu was supported by National Natural Science Foundation of China (11201307), Ministry of Education of China (20123127120001) and Natural Science Foundation of Shanghai (17ZR1420800).}

\subjclass[2010]{Primary 65N12, 65M12, 65K10}

\keywords{minimization, energy stability, Allen-Cahn}

\begin{abstract}
In this paper an iterative minimization method is proposed to approximate the minimizer to the double-well energy functional arising in the phase-field theory. The method is based on a quadratic functional posed over a nonempty closed convex set and is shown to be unconditionally energy stable. By the minimization approach, we also derive an variant of the first-order scheme for the Allen-Cahn equation, which has been constructed in the context of Invariant Energy Quadratization, and prove its unconditional energy stability.
\end{abstract}
\maketitle
%\tableofcontents

%\input{0_introduction}
%\input{1_method}
%\input{2_stability}
%\input{reference}

%\input{temp}

\section{Introduction}

The Allen-Cahn equation \cite{Allen1979} is a basic model, describing the evolution of a diffuse phase boundary concentrated in a small region of size $\epsilon$, in the phase-field theory.  Now it has been widely used in the simulation of interfacial dynamics of multi-component systems. From the mathematical perspective, this equation can also be viewed as an $L^2$-gradient flow of a free energy functional (also see \eqref{eq:orig_energy} below)
\[
E(u):=\int_\Omega \frac{1}{2}|\nabla u|^2 +\frac{1}{\epsilon^2}F(u) \dx,
\]
i.e.,
\[
    u_t-\Delta u+\frac{1}{\epsilon^2}F'(u)=0
\]
subject to $\frac{\partial u}{\partial \boldsymbol{n}}=0$ on ${\partial\Omega}$, where $\bs n$ is the unit outward normal on $\p\Omega$. It is easy to check that the solution of the Allen-Cahn equation satisfies the energy dissipation law for $E(u)$:
\begin{equation}\label{eq:disslaw}
    \frac{\dd}{\dd t} E(u)=-\|u_t\|^2_{L^2(\Omega)}\leq 0.
\end{equation}

In designing a numerical scheme for the Allen-Cahn equation, one naturally wish to preserve \eqref{eq:disslaw} in the discrete level. This leads to the so-called energy stability for the time discretization; see section 4 below for more detail. However, due to non-convexity of $F(u)$, a tiny time step size, e.g.,  $k\leq\epsilon^2$ for the double well potential $F(u)=\frac{1}{4}(u^2-1)^2$, is required to satisfy the energy stability if a fully implicit scheme is applied. To overcome this difficulty, a popular approach is the convex splitting scheme \cite{Elliott1993,Eyre1998,AKW2013,GLWW2014}. This scheme is energy stable without any stringent condition on the time step and a nonlinear system is solved at each time step. Another approach is the (stabilized) semi-implicit scheme \cite{Chen1998,Zhu1999,Shen2010}. This scheme allows a much larger time step size than the fully explicit scheme and is proved to be unconditionally energy stable for all stabilization constants no less than $\frac{L}{2}$ if the second derivative of the nonlinear potential $F(u)$ is bounded by some positive constant $L$.

Recently, an invariant energy quadratization (IEQ) approach is proposed in \cite{Yang2016} for the Allen-Cahn equation and the Cahn-Hilliard equation as a generalization of the augmented Lagrangian multiplier (ALM) method in \cite{Guillen2013}. IEQ can yield an unconditional energy stable scheme for a large class of free energies only if $F(u)\geq -C_0$, $C_0$ is a given positive constant. %the nonlinear potential $F(u)$ is bounded from below.
 Later, replacing $F(u)\geq -C_0$ by $\int_\Omega F(u)dx\geq -C_0$, Shen et al. developed a scalar auxiliary variable (SAV) approach  in \cite{Shen2018} to derive an unconditionally energy stable scheme. All these three approaches feature solving linear systems with variable or constant coefficients, but the numerical energy involved depends on the auxiliary variable and is not the original energy $E(u)$.

In this paper, we plan to deal with $E(u)$ directly by a
minimization approach. In particular, an iterative minimization
method to approximate $E(u)$ with the double well potential will
be presented. Assuming some previous $u^{n-1}$ is given, our
starting point is to approximate $\frac{1}{4}(u^2-1)^2$ by a
quadratic function. This in turn induces a quadratic functional
$E_Q(u;u^{n-1})$ approximating $E(u)$ at the neighbourhood of
$u^{n-1}$. Then we minimize $E_{Q}(u;u^{n-1})$ to get $u^{n}$; see Algorithm \ref{al:itercvx}. It is worth mentioning that the
minimization problem is constrained by the bound $|u|\leq 1$ a.e. in $\Omega$
so that Algorithm \ref{al:itercvx} is shown to be unconditionally
energy stable with respect to $E(u)$; see section 3. Moreover, we shall make use of this idea to derive an variant of the first-order scheme for the Allen-Cahn equation by IEQ in \cite{Yang2016} and provide a rigorous proof of the unconditional energy stability for the proposed scheme; see section 4. It should be pointed out that the energy used here is $E(u)$ not the modified one in \cite{Yang2016}.

The rest of this paper is organized as follows. In section 2, we propose an iterative minimization algorithm for $E(u)$ with $F(u)=\frac{1}{4}(u^2-1)^2$. Then the unconditional stability of the algorithm is proved in section 3. In section 4, we take the proposed minimization approach to the Allen-Cahn equation.

\section{An iterative convex minimization method}
We start with the following unconstrained minimization problem:
\begin{equation} \label{eq:original}
\min_{u\in H^1(\Omega)}\ E(u),
\end{equation}
where $\Omega \subset \Re^d$, $d=2,3$. The objective energy functional is defined by
\begin{eqnarray}\label{eq:orig_energy}
E(u):=\int_\Omega \frac{1}{2}|\nabla u|^2 +\frac{1}{\epsilon^2}F(u) \dx,
\end{eqnarray}
where $F: H^1(\Omega) \to \Re$ is a double well potential:
$$F(u)=\frac{1}{4}(u^2-1)^2.$$
Since $F(u)$ is non-convex, it is not easy to solve the minimization problem.

Motivated by ALM approach \cite{Guillen2013}, we introduce a new variable $p$, defined by
\begin{eqnarray*}
p=F^{\frac{1}{2}}(u)=\frac{1}{2}(u^2-1).
\end{eqnarray*}
Then we can rewrite \eqref{eq:original} as a constrained minimization problem with a quadratic energy functional:
\begin{equation} \label{eq:orig_constr_problem}
    \begin{aligned}
    \min\limits_{u\in H^1(\Omega),\ p\in L^2(\Omega)} &\int_\Omega \frac{1}{2}|\nabla u|^2 +\frac{1}{\epsilon^2}p^2 \dx \\
    \textrm {s.t.}~&p=\frac{1}{2}(u^2-1). %\nonumber
    \end{aligned}
\end{equation}
Although the objective functional of \eqref{eq:orig_constr_problem} is quadratic, we have to deal with a nonlinear constraint for $u$ and $p$. Thus this problem is still not easy to solve as \eqref{eq:original}.

We shall propose an iterative method to approximate \eqref{eq:original} based on the formulation \eqref{eq:orig_constr_problem}. At each iteration, a constrained minimization problem featuring a quadratic objective functional is solved. It is known that this kind of problems can be solved efficiently.

With an initial guess $u^0$ satisfying $\|u^0\|_{L^\infty(\Omega)} \leq 1$ given and $u^{n-1}$ standing for the solution at the $(n-1)$-th iteration. At the $n$-th iteration ($n\geq 1$), we approximate constraint $p=p(u)$ with its linear expansion at $u^{n-1}$. The linear approximation is denoted by $p_L(u;u^{n-1})$:
\begin{eqnarray*}
p_L(u;u^{n-1})&=&p(u^{n-1})+\langle p^\prime(u^{n-1}),u-u^{n-1}\rangle  \\
&=& \frac{1}{2}\big( (u^{n-1})^2-1 \big)+u^{n-1}(u-u^{n-1})  \\
&=& u^{n-1}u-\frac{1}{2}(u^{n-1})^2-\frac{1}{2}.
\end{eqnarray*}
Next we define a new quadratic energy functional, which is an approximation of the original energy $E(u)$ at point $u^{n-1}$, for the $n$-th iteration:
\begin{eqnarray}
\label{eq:EQ} E_Q(u;u^{n-1})&=&\int_\Omega \frac{1}{2}|\nabla u|^2+\frac{1}{\epsilon^2} p^2_L(u;u^{n-1}) \dx  \\
&=& \int_\Omega \frac{1}{2}|\nabla u|^2+\frac{1}{\epsilon^2}\left(u^{n-1}u-\frac{1}{2}(u^{n-1})^2-\frac{1}{2}\right)^2 \dx .
\end{eqnarray}
It is straight forward to verify that
\begin{enumerate}
\item $E_Q(u^{n-1};u^{n-1})=E(u^{n-1})$;
\item $E^\prime_Q(u^{n-1};u^{n-1})=E^{\prime}(u^{n-1})$;
\item $E^{\prime\prime}_Q(u^{n-1}) v, v \rangle=\|\nabla v\|^2+2(u^{n-1})^2\|v\|^2/\epsilon^2 \geq 0$ %=\|\nabla v\|^2+2(u^{n-1})^2\|v\|^2/\epsilon^2$
\end{enumerate}
That is $E_Q(u;u^{n-1})$ is a convex quadratic functional and a second order approximation of $E$ at $u^{n-1}$.

We solve the following constrained minimization problem at the $n$-th iteration:
\begin{eqnarray*}
\min_{u\in K} E_Q(u;u^{n-1}),
\end{eqnarray*}
where $K:=\left\{u\in H^1(\Omega) \left|~ |u|\leq 1~\mbox{a.e. in}~\Omega\right. \right\}$ is a nonempty closed convex subset of $H^1(\Omega)$. Since $E_{Q}$ is coercive and quadratic, the above problem has a solution in $K$ and can be solved efficiently~\cite{Tai2003}.

We are now in a position to introduce an iterative minimization method to solve minimization problem~\eqref{eq:original}.

\medskip

\IncMargin{1em}
\begin{algorithm}[H]

 Given $u^0\in K$ and set $n=1$\;

 Solve the constrained optimization problem
            \begin{equation}\label{ae-min}
                u^n=\textrm{arg}\min_{u\in K}E_Q(u;u^{n-1}) ;
            \end{equation} \\
 Set $n:=n+1$ and go to Step 2\;

    \caption{Iterative convex minimization method \label{al:itercvx}}
\end{algorithm}
\DecMargin{1em}

\section{Energy stability}
For energy minimization problems, we say that a method is {\em energy stable} if
\begin{eqnarray*}
E(u^n) \leq E(u^{n-1}),
\end{eqnarray*}
where $E(u)$ is the objective energy functional such as \eqref{eq:orig_energy}, $u^n$ is the result generated by the $n$-th iteration.
We will prove that Algorithm 1 is energy stable with respect to the original energy $E(u)$.

\begin{theorem}\label{th:energystable}
Let $u^n\in {\rm argmin}_{u\in K}\ E_Q(u;u^{n-1}), n=1,2,3,\ldots$ with $E_Q$ defined by \eqref{eq:EQ}. $E(u)$ is the  energy defined in \eqref{eq:orig_energy}, then
\begin{equation*}
E(u^n)\leq E(u^{n-1}).
\end{equation*}
\end{theorem}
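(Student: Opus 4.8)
The plan is to compare $E(u^n)$ and $E(u^{n-1})$ by inserting the auxiliary quadratic functional $E_Q(\cdot;u^{n-1})$ as a bridge. The key observation is a pointwise inequality: for any $u$ with $|u|\le 1$ a.e., the linearized potential term dominates the true potential term, i.e.
\begin{equation*}
F(u)=\tfrac14(u^2-1)^2 \;\le\; p_L(u;u^{n-1})^2 \qquad\text{a.e. in }\Omega,
\end{equation*}
whenever $|u^{n-1}|\le 1$ as well. Indeed, $F(u)=\big(\tfrac12(u^2-1)\big)^2=p(u)^2$, and $p$ is a convex function of $u$ (since $p''(u)=1>0$), so its graph lies above any tangent line: $p(u)\ge p_L(u;u^{n-1})$. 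To pass from this to the inequality on squares I need $p_L(u;u^{n-1})\le 0$ on the relevant range, which holds because $p_L(u;u^{n-1}) = u^{n-1}u-\tfrac12(u^{n-1})^2-\tfrac12 \le \tfrac12 - \tfrac12(u^{n-1})^2 - \tfrac12 \le 0$ using $|u|,|u^{n-1}|\le 1$; combined with $p(u)\ge p_L$ and $p(u)\le 0$ for $|u|\le 1$, both $p(u)$ and $p_L$ lie in $(-\infty,0]$ with $p(u)\ge p_L$, hence $p(u)^2\le p_L^2$. This is the step I expect to be the main technical point — getting the sign bookkeeping right so that squaring reverses the inequality correctly — and it is precisely where the constraint $u\in K$ is essential.

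Granting the pointwise inequality, integrating over $\Omega$ and adding the identical gradient term gives, for every $u\in K$,
\begin{equation*}
E(u)\;\le\; E_Q(u;u^{n-1}),
\end{equation*}
provided $u^{n-1}\in K$. Next I use the two consistency properties from the excerpt: $E_Q(u^{n-1};u^{n-1})=E(u^{n-1})$. Since $u^n$ minimizes $E_Q(\cdot;u^{n-1})$ over $K$ and $u^{n-1}\in K$, we have $E_Q(u^n;u^{n-1})\le E_Q(u^{n-1};u^{n-1})=E(u^{n-1})$. Chaining everything,
\begin{equation*}
E(u^n)\;\le\; E_Q(u^n;u^{n-1})\;\le\; E_Q(u^{n-1};u^{n-1})\;=\;E(u^{n-1}),
\end{equation*}
which is the claim.

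One bookkeeping item must be dispatched first: the argument above requires $u^{n-1}\in K$, i.e.\ $\|u^{n-1}\|_{L^\infty(\Omega)}\le 1$, for every $n\ge 1$. This follows by induction — $u^0\in K$ by hypothesis, and each subsequent iterate $u^n$ is by definition the minimizer over $K$, hence lies in $K$ — so $|u^{n-1}|\le 1$ a.e.\ is available at every step, and the pointwise inequality applies. I would state this induction explicitly at the start of the proof so that the pointwise estimate $F(u)\le p_L(u;u^{n-1})^2$ is justified on $K\times K$. No regularity beyond $u\in H^1(\Omega)$ is needed since the potential comparison is pointwise and the gradient terms cancel exactly.
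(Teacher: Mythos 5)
Your argument is correct and follows essentially the same route as the paper: establish $E(u)\le E_Q(u;u^{n-1})$ for all $u\in K$ via a pointwise comparison of $p(u)^2$ with $p_L(u;u^{n-1})^2$, combine this with $E_Q(u^n;u^{n-1})\le E_Q(u^{n-1};u^{n-1})=E(u^{n-1})$, and chain the inequalities. The only cosmetic difference is that the paper factors $p^2-p_L^2=(p-p_L)(p+p_L)$ and shows $p+p_L\le u^2-1\le 0$, whereas you show $p\le 0$ and $p_L\le 0$ separately and invoke the reversal of order under squaring on $(-\infty,0]$; these are equivalent.

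One intermediate inequality in your write-up is false as stated: $u^{n-1}u-\tfrac12(u^{n-1})^2-\tfrac12\le\tfrac12-\tfrac12(u^{n-1})^2-\tfrac12$ amounts to $u^{n-1}u\le\tfrac12$, which fails for instance at $u=u^{n-1}=1$ (the left side of your chain is $0$, the middle term is $-\tfrac12$). Fortunately the step is redundant: from the two facts you already have, namely $p(u)\ge p_L(u;u^{n-1})$ by convexity and $p(u)=\tfrac12(u^2-1)\le 0$ on $K$, you get $p_L\le p\le 0$ directly; alternatively, Young's inequality $u^{n-1}u\le\tfrac12(u^{n-1})^2+\tfrac12u^2$ yields $p_L(u;u^{n-1})\le\tfrac12(u^2-1)\le 0$. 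With that repair the proof is complete, and your explicit induction showing that every iterate remains in $K$ makes precise a point the paper leaves implicit.
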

\begin{proof}
Since $u^{n-1}\in K$ and $u^n\in {\rm arg}\min_{u\in K}\ E_Q(u;u^{n-1})$, the following inequality holds:
\begin{equation*}
E_Q(u^n;u^{n-1})=\min_{u\in K} E_Q(u;u^{n-1})\leq E_Q(u^{n-1};u^{n-1}) = E(u^{n-1}).
\end{equation*}
Now we prove that $E(u) \leq E_Q(u;u^{n-1})$ holds for all $u\in K$.
\begin{eqnarray*}\label{eq:diff_EQ}
E(u) - E_Q(u;u^{n-1}) &=& \int_\Omega p^2(u) - p^2_L(u;u^{n-1}) \dx \nonumber \\
&=& \int_\Omega \left[p(u)-p_L(u;u^{n-1})\right] \left[p(u)+p_L(u;u^{n-1})\right] \dx.
\end{eqnarray*}
It is easy to verify that $p(u) - p_L(u;u^{n-1})\geq 0$ due to the convexity of $p(u)$. We only need to prove that $p_L(u;u^{n-1})+p(u)\leq 0$ holds for any $|u|\leq 1$. That is,
\begin{eqnarray*}
p_L(u;u^{n-1})+p(u)
= \frac{1}{2}u^2+u^{n-1}u-\frac{1}{2}(u^{n-1})^2-1
\leq u^2-1 \leq 0, \quad \forall\ u\in K.
\end{eqnarray*}
where the last inequality holds since that $u$ satisfies the constraint $u\in K$.
Then
\begin{eqnarray*}
E(u^n) - E_Q(u^n;u^{n-1})\leq 0.
\end{eqnarray*}
Hence,
\begin{eqnarray*}
E(u^n) \leq E_Q(u^n;u^{n-1}) \leq E_Q(u^{n-1};u^{n-1}) = E(u^{n-1}).
\end{eqnarray*}
\end{proof}

%-----------------------------------------------------------------------------------------
\section{Application to the Allen-Cahn equation}%\label{app_Allen-Cahn}
The $L^2$-gradient flow of energy functional $E(u)$ defined by \eqref{eq:orig_energy} is the so-called Allen-Cahn equation:
\begin{equation}\label{eq:ac_eqn}
u_t-\Delta u+\frac{1}{\epsilon^2}f(u)=0,
\end{equation}
where $f(u)=F^\prime(u)=u(u^2-1)$.
In the previous section, we introduce Algorithm \ref{al:itercvx} to find a steady state of the Allen-Cahn equation \eqref{eq:ac_eqn} via an energy minimization problem \eqref{eq:original}. By this method, we can solve the problem \eqref{eq:original} efficiently and prove the energy stability. In this process, we treat the problem as a static one. In some scenarios, dynamics of the Allen-Cahn equation is more important.

Now we consider solving the Allen-Cahn equation \eqref{eq:ac_eqn}.
The first-order implicit scheme of \eqref{eq:ac_eqn} reads:
\begin{equation} \label{eq:discreteac}
\frac{u^n-u^{n-1}}{k}=\Delta u^n-\frac{1}{\epsilon^2}f(u^n),
\end{equation}
where $k>0$ is the time step size.

We say that a method is {\em unconditionally energy stable} if the energy stability is independent of time step size. That is,
\begin{eqnarray*}
E(u^{n-1}) \geq E(u^n)
\end{eqnarray*}
holds without any constraint on the time step size $k$. Otherwise we say that the method is {\em conditionally energy stable}.

It is well-known that the full implicit scheme is conditionally stable when $k\leq\epsilon^2 \ll 1$, and convex splitting is unconditionally stable with $f(u^n)$ in~\eqref{eq:discreteac} replaced by $f_+(u^n)-f_{-}(u^{n-1})$, where $f_+(u)=u^3$ and $f_{-}(u)=u$.

In order to apply Algorithm \ref{al:itercvx} to find a solution of \eqref{eq:discreteac}, following Xu et al \cite{Xu2016} we shall define an energy at point $u^{n-1}$:
\begin{eqnarray*}
J(u;u^{n-1})&=&E(u)+\frac{1}{2k}\|u-u^{n-1}\|^2_{L^2(\Omega)} \\
&=& \int_\Omega \frac{1}{2}|\nabla u|^2 + \frac{1}{\epsilon^2}F(u) \dx+\frac{1}{2k}\|u-u^{n-1}\|^2_{L^2(\Omega)} \\
&=& Q(u;u^{n-1})+\frac{1}{\epsilon^2}\int_\Omega F(u) \dx
\end{eqnarray*}
where $Q(u;u^{n-1})=\frac{1}{2}\|\nabla u\|^2_{L^2(\Omega)} +\frac{1}{2k}\|u-u^{n-1}\|^2_{L^2(\Omega)}$ is a quadratic functional of $u$.

We shall use Algorithm \ref{al:itercvx}, which requires to solve a constrained minimization problem as follows at the $n$-th iterations:
\begin{eqnarray}\label{eq:const_ac}
&\min_{u\in H^1(\Omega)}& J_Q(u;u^{n-1}), \nonumber\\
&\textrm{s.t.}& u\in K,
\end{eqnarray}
where the objective functional $J_Q(u;u^{n-1})$ is defined by
\begin{equation}\label{eq:JQ}
J_Q(u;u^{n-1}):=Q(u;u^{n-1})+\frac{1}{\epsilon^2}\int_\Omega p^2_L(u;u^{n-1}) \dx
\end{equation}
and $K$ is the closed convex set defined as in the previous section. Due to the strict convexity of $J_Q$, this problem admits a unique solution and can be solved efficiently.

Let $u^n$ be the minimizer of the unconstrained minimization problem
\begin{eqnarray*}
\min_{u\in H^1(\Omega)}\ J_Q(u;u^{n-1}),
\end{eqnarray*}
which satisfies
\[
    J_Q^\prime(u^n;u^{n-1})=0,
    \]
that is,
\begin{eqnarray}\label{eq:IEQ}
&&\frac{u^n-u^{n-1}}{k}=\Delta u^n-\frac{2}{\epsilon^2}(u^{n-1})^2u^n+\frac{1}{\epsilon^2}(u^{n-1})^3+\frac{1}{\epsilon^2}u^{n-1}. \label{eq:eqieq}
\end{eqnarray}
We note that \eqref{eq:eqieq} is the same as the first-order scheme of the IEQ approach \cite{Guillen2013,Yang2016}.

Here we provide a different interpretation of IEQ from the perspective of the energy minimization. The constraint $\|u\|_{L^\infty(\Omega)}\leq 1$ is added in order to prove the scheme is unconditional stable. Such a priori bound can be proved for a weak solution of the Allen-Cahn equation (see \cite{Feng2007}) and can be established for a modified fully implicit scheme proposed in Xu, Li and Wu~\cite{Xu2016}. However, we can not prove it for the IEQ iteration \eqref{eq:IEQ} and thus explicitly impose it as a constraint.

\begin{theorem}
Let $u^n={\rm argmin}_{u\in K}\ J_Q(u;u^{n-1})$, $n=1,2,3,\ldots$ with $J_Q$ defined in \eqref{eq:JQ}. $E(u)$ is the energy functional of the Allen-Cahn equation, defined by \eqref{eq:orig_energy}, then the energy is unconditionally stable, that is,
\begin{eqnarray*}
E(u^n)\leq E(u^{n-1}).
\end{eqnarray*}
\end{theorem}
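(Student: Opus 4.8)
The plan is to run the same two-sided sandwich argument used in the proof of Theorem~\ref{th:energystable}, but now for the penalized functional $J_Q$. The first step is to record the decomposition
\[
J_Q(u;u^{n-1}) \;=\; E_Q(u;u^{n-1}) \;+\; \frac{1}{2k}\,\|u-u^{n-1}\|_{L^2(\Omega)}^2 ,
\]
which is immediate from \eqref{eq:JQ}, \eqref{eq:EQ} and the splitting $Q(u;u^{n-1}) = \frac12\|\nabla u\|_{L^2(\Omega)}^2 + \frac{1}{2k}\|u-u^{n-1}\|_{L^2(\Omega)}^2$. Evaluating this identity at $u=u^{n-1}$ makes the penalty term vanish and, since $p_L(u^{n-1};u^{n-1}) = p(u^{n-1})$, yields $J_Q(u^{n-1};u^{n-1}) = E_Q(u^{n-1};u^{n-1}) = E(u^{n-1})$.

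Next, since $u^{n-1}\in K$ and $u^n$ minimizes $J_Q(\,\cdot\,;u^{n-1})$ over $K$, I obtain the upper bound $J_Q(u^n;u^{n-1}) \le J_Q(u^{n-1};u^{n-1}) = E(u^{n-1})$. For the lower bound I reuse the pointwise estimate established inside the proof of Theorem~\ref{th:energystable}: for every $u\in K$ one has $p(u)-p_L(u;u^{n-1})\ge 0$ by convexity of $p$, and $p(u)+p_L(u;u^{n-1}) \le u^2-1\le 0$ by the constraint, hence $E(u)\le E_Q(u;u^{n-1})$ on $K$. Applying this with $u=u^n\in K$ together with the decomposition above gives
\[
J_Q(u^n;u^{n-1}) = E_Q(u^n;u^{n-1}) + \frac{1}{2k}\,\|u^n-u^{n-1}\|_{L^2(\Omega)}^2 \;\ge\; E(u^n) + \frac{1}{2k}\,\|u^n-u^{n-1}\|_{L^2(\Omega)}^2 .
\]
Combining the two bounds yields $E(u^n) + \frac{1}{2k}\|u^n-u^{n-1}\|_{L^2(\Omega)}^2 \le E(u^{n-1})$; dropping the nonnegative penalty term gives $E(u^n)\le E(u^{n-1})$. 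Since no hypothesis on $k$ entered the argument, the stability is unconditional, and keeping the penalty term gives the sharper discrete dissipation estimate mirroring \eqref{eq:disslaw}.

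Because every ingredient is already available, I do not expect a genuine obstacle. The only point deserving care is the same one that forced the constrained set $K$ in the previous section: the inequality $p(u)+p_L(u;u^{n-1})\le 0$, and hence $E\le E_Q$, is valid \emph{only} for $u$ with $|u|\le 1$ a.e., which is why the constraint $\|u\|_{L^\infty(\Omega)}\le 1$ must be imposed in \eqref{eq:const_ac} even though the formal Euler--Lagrange equation \eqref{eq:IEQ} is the scheme one ultimately wants to analyze. Accordingly, one should be explicit in the write-up that the theorem is a statement about the constrained minimizer $u^n = \mathrm{argmin}_{u\in K} J_Q(u;u^{n-1})$, rather than about arbitrary solutions of \eqref{eq:IEQ}.
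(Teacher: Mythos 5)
Your proposal is correct and follows essentially the same route as the paper: the minimizing property over $K$ gives $J_Q(u^n;u^{n-1})\le J_Q(u^{n-1};u^{n-1})=E(u^{n-1})$, and the pointwise estimates $p-p_L\ge 0$, $p+p_L\le 0$ on $K$ from Theorem~\ref{th:energystable} give $J_Q(u;u^{n-1})\ge E(u)$ for $u\in K$. The only (harmless) refinement is that you keep the penalty term $\frac{1}{2k}\|u^n-u^{n-1}\|_{L^2(\Omega)}^2$ to record a discrete dissipation estimate, whereas the paper simply drops it.
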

\begin{proof}
The proof is similar to Theorem \ref{th:energystable}. By definition,
\begin{eqnarray*}
J_Q(u^{n-1};u^{n-1})
=\int_\Omega \frac{1}{2}|\nabla u^{n-1}|^2 +\frac{1}{\epsilon^2} p^2(u^{n-1}) \dx
=E(u^{n-1}).
\end{eqnarray*}
Since $u^{n-1}\in K$ and $u^n \in {\rm argmin}_{u\in K} J_Q(u;u^{n-1})$, we have
$$J_Q(u^{n};u^{n-1}) = \min_{u\in K} J_Q(u;u^{n-1}) \leq J_Q(u^{n-1};u^{n-1}) = E(u^{n-1}).$$
For any $u\in K$, similar arguments to those in the proof of Theorem \ref{th:energystable} imply that
\begin{eqnarray*}
J_Q(u;u^{n-1})-E(u)
&=& \frac{1}{\epsilon^2}\int_\Omega p^2_L(u;u^{n-1})-p^2(u) \dx+\frac{1}{2k}\|u^{n-1}-u\|^2_{L^2(\Omega)} \\
&\geq& \frac{1}{\epsilon^2}\int_\Omega p^2_L(u;u^{n-1})-p^2(u) \dx \\
&\geq&0
\end{eqnarray*}
Collecting the above two estimates with $u=u^n$, we then obtain the conclusion.
\end{proof}

\begin{remark}\rm
    In order to ensure the unconditional stability of $E(u)$, the constraint $u\in K$ is incorporated in \eqref{eq:ac_eqn}, without which, we cannot prove
    \begin{eqnarray*}
        E(u^n)\leq E(u^{n-1}).
\end{eqnarray*}
The unconditional energy stability in \cite{Yang2016} is indeed the inequality
$$
J_Q(u^{n};u^{n-1}) \leq J_Q(u^{n-1};u^{n-1}),
$$
which corresponds to a modified energy not the original one.
\end{remark}

%\bibliographystyle{abbrv}
 %\bibliography{/Dropbox/Math/biblib/library}
\end{document}